\documentclass[11pt]{amsart}
\usepackage[T1]{fontenc}
\usepackage[utf8]{inputenc}
\usepackage{lmodern}
\usepackage{amsmath,amssymb,amsthm,mathtools}
\usepackage{booktabs,array}
\usepackage{tikz-cd}
\usepackage[margin=28mm]{geometry}
\usepackage[expansion=false]{microtype}
\usepackage[hidelinks]{hyperref}
\hypersetup{pdftitle={A counterexample to the TF-cone conjecture},
 pdfauthor={Jindong Yan and Shenglin Zhu}}
\newtheorem{maintheorem}{Theorem}

\newtheorem{theorem}{Theorem}[section]
\newtheorem{proposition}[theorem]{Proposition}
\newtheorem{lemma}[theorem]{Lemma}
\newtheorem{corollary}[theorem]{Corollary}
\newtheorem{conjecture}{Conjecture}
\theoremstyle{definition}

\theoremstyle{remark}

\DeclareMathOperator{\Hom}{Hom}
\DeclareMathOperator{\Ext}{Ext}
\DeclareMathOperator{\End}{End}
\DeclareMathOperator{\Coker}{Coker}
\DeclareMathOperator{\Ker}{Ker}
\DeclareMathOperator{\im}{im}
\DeclareMathOperator{\rank}{rank}
\DeclareMathOperator{\ind}{ind}
\DeclareMathOperator{\Cone}{Cone}
\DeclareMathOperator{\pd}{pd}

\DeclareMathOperator{\Mor}{Mor}
\DeclareMathOperator{\rad}{rad}

\newcommand{\R}{\mathbb R}
\newcommand{\N}{\mathbb N}
\newcommand{\Kb}{K^{\mathrm b}}
\newcommand{\Db}{D^{\mathrm b}}
\allowdisplaybreaks[1]
\title{A counterexample to the TF-cone conjecture}
\author{Jindong Yan}
\thanks{\raggedright\setlength{\parindent}{\normalparindent} Jindong Yan (corresponding author):
School of Mathematical Sciences, Fudan University, Shanghai~200433, China.
Email: \href{mailto:22110180048@m.fudan.edu.cn}{\texttt{22110180048@m.fudan.edu.cn}}.}
\author{Shenglin Zhu}
\thanks{\raggedright\setlength{\parindent}{\normalparindent} Shenglin Zhu:
School of Mathematical Sciences, Fudan University, Shanghai~200433, China.
Email: \href{mailto:mazhusl@fudan.edu.cn}{\texttt{mazhusl@fudan.edu.cn}}.}
\date{}
\subjclass[2020]{16G20, 16E05, 16G10}
\keywords{TF equivalence, canonical decomposition, projective presentation,
Kronecker quiver, brick, minimal projective resolution}

\begin{document}
\begin{abstract}
The TF-cone conjecture of Asai and Iyama predicts that the TF
equivalence class of an integral $g$-vector is the open cone generated
by the canonical summands of all its positive multiples. We give a
counterexample to this conjecture. For the tensor
product of the three-arrow Kronecker algebra with the path algebra
of $s\to t$, a vector $w$ has a two-dimensional TF equivalence class,
whereas every positive integral multiple $mw$ is generically
indecomposable. The cone generated by these canonical summands is
therefore a single ray. The same example disproves the
relative-interior version of this conjecture proposed by Haerizadeh
and Yassemi.
\end{abstract}

\maketitle
\enlargethispage{4pt}

\section{Introduction and preliminaries}

For an integral vector $g$, Asai and Iyama conjecture that its TF
equivalence class is determined by its canonical decomposition
\cite[Conjecture~1.2]{AI}.  Canonical decomposition
of dimension vectors for acyclic quivers was developed by Kac and Schofield
\cite{Kac,Schofield}: the summand dimensions of a general
representation depend only on its dimension vector, and Schofield
gave an algorithm to compute them from the Euler form.  For
quivers with relations, representation varieties can have several
irreducible components.  Derksen and Fei instead considered
presentations between fixed projective modules, whose parameter
spaces are affine spaces, and extended canonical decomposition to
their classes in the projective Grothendieck group \cite{DF}.

Stability supplies a second geometry on this group.  King's
numerical notion of semistability identifies the modules selected
by geometric invariant theory and gives moduli spaces of
semistable representations \cite{King}.  To group real weights
that determine the same semistable torsion pairs, Asai introduced TF
equivalence \cite{Asai}.  The connection with
$\tau$-tilting theory is concrete: Br\"ustle, Smith and Treffinger
proved that the positive $g$-vector cone of a $\tau$-tilting pair
is a chamber \cite[Proposition~3.15]{BST}, and Asai proved that
every chamber arises in this way from a two-term silting complex
\cite[Theorem~1.4]{Asai}.  Moreover, the positive cone of
the indecomposable summands of a two-term presilting complex is
a TF equivalence class \cite[Theorem~1.3]{Asai}.  Using TF equivalence
classes, Asai gave a proof that the closed $g$-vector cones of two-term
silting complexes cover the real Grothendieck group if and only if
the algebra is $\tau$-tilting finite
\cite[Theorem~1.6]{Asai}.  Describing TF classes beyond these
cones is therefore a natural problem in the infinite case
\cite{AI,HY}.

Let $k$ be an algebraically closed field and $\Lambda$ a
finite-dimensional basic $k$-algebra.  For an integral weight
$g\in K_0(\operatorname{proj}\Lambda)$,
write $\ind(g)$ for the set of classes in the canonical decomposition
of $g$, and put
\[
 \ind(\N g)=\bigcup_{m\geq1}\ind(mg).
\]
Following the cone convention in \cite[Introduction]{AI}, for a
subset $S$ of a finite-dimensional real vector space set
\[
 \Cone(S)=
 \left\{\sum_{i=1}^r a_iv_i:
 r\geq0,\ a_i\in\R_{\geq0},\ v_i\in S\right\},
 \qquad
 \Cone^\circ(S)=\operatorname{relint}\Cone(S).
\]
Relative interior is taken in the linear span of the cone.
For a finite set $S=\{v_1,\ldots,v_r\}$, this convention gives
$\Cone^\circ(S)=\sum_{i=1}^r\R_{>0}v_i$.
When $S=\varnothing$, both cones are $\{0\}$.
Asai and Iyama proved the general inclusion
\cite[Theorem~1.1]{AI}
\[
 \Cone^\circ\bigl(\ind(g)\bigr)\subseteq[g]_{\mathrm{TF}},
\]
and equality holds for hereditary and $E$-tame algebras
\cite[Theorem~1.3]{AI}.  Since these algebras satisfy the ray
condition \cite[Propositions~6.7(a) and~7.1(a)]{AI}, the cones
generated by $\ind(g)$ and $\ind(\N g)$ coincide
\cite[Proposition~5.3(b)]{AI}.  The ray condition need not hold
in general \cite[Example~5.9(c)]{AI}.
Positive rescaling preserves TF equivalence, so including the
summands of all multiples leads to
the following conjecture.

\begin{conjecture}[Asai and Iyama
  {\cite[Conjecture~1.2]{AI}}]\label{conj:asai-iyama}
For every finite-dimensional basic $k$-algebra $\Lambda$ and every
$g\in K_0(\operatorname{proj}\Lambda)$,
\[
 [g]_{\mathrm{TF}}=\Cone^\circ\bigl(\ind(\N g)\bigr).
\]
\end{conjecture}

Haerizadeh and Yassemi study these cones as a way to describe
regions outside the chambers \cite[Introduction]{HY}.
For their open cone \cite[Definition~5.5]{HY}, we use the separate
notation
\[
 \mathcal C^\circ(g)
 :=\bigcup_{m\geq1}\Cone^\circ\bigl(\ind(mg)\bigr).
\]
Their relative-interior formulation is the following.

\begin{conjecture}[Haerizadeh and Yassemi
  {\cite[Conjecture~1.1]{HY}}]\label{conj:haerizadeh-yassemi}
For every finite-dimensional basic $k$-algebra $\Lambda$ and every
$g\in K_0(\operatorname{proj}\Lambda)$,
\[
 [g]_{\mathrm{TF}}^\circ=\mathcal C^\circ(g),
 \qquad
 [g]_{\mathrm{TF}}^\circ
 :=\operatorname{relint}[g]_{\mathrm{TF}},
\]
where relative interior is taken in the real linear span of the
TF class.
\end{conjecture}

Haerizadeh and Yassemi prove that the equality in
Conjecture~\ref{conj:haerizadeh-yassemi} is equivalent to
equality of the dimensions of the TF class and the cone generated
by $\ind(\N g)$ \cite[Theorem~1.4]{HY}.

We give a counterexample in which
\[
 \dim_{\mathbb{R}}\langle[g]_{\mathrm{TF}}\rangle_{\mathbb{R}}=2
 \quad\text{(Corollary~\ref{prop:tf-class})},
 \qquad
 \dim_{\mathbb{R}}\langle\mathcal{C}(g)\rangle_{\mathbb{R}}=1
 \quad\text{(Proposition~\ref{prop:generic-ind})}.
\]
Thus both conjectures fail.

Consider the three-arrow Kronecker quiver
\[
 Q_B:\quad
 \begin{tikzcd}[column sep=large]
  1 \arrow[r,bend left=45,"\alpha_1"]
    \arrow[r,"\alpha_2" description]
    \arrow[r,bend right=45,"\alpha_3"'] & 2
 \end{tikzcd}
 \qquad B=kQ_B.
\]
Let $Q_A$ be the quiver
\[
 \begin{tikzcd}[column sep=huge,row sep=huge]
  (1,s) \arrow[r,bend left=25,"\alpha_{1,s}"]
        \arrow[r,"\alpha_{2,s}" description]
        \arrow[r,bend right=25,"\alpha_{3,s}"']
        \arrow[d,"v_1"']
   & (2,s) \arrow[d,"v_2"] \\
  (1,t) \arrow[r,bend left=25,"\alpha_{1,t}"]
        \arrow[r,"\alpha_{2,t}" description]
        \arrow[r,bend right=25,"\alpha_{3,t}"']
   & (2,t)
 \end{tikzcd}
\]
Impose the three relations
\[
 v_2\alpha_{1,s}=\alpha_{1,t}v_1,\qquad
 v_2\alpha_{2,s}=\alpha_{2,t}v_1,\qquad
 v_2\alpha_{3,s}=\alpha_{3,t}v_1.
\]
If $I$ is the ideal generated by the differences of the two sides,
then
\[
 A=kQ_A/I\cong B\otimes_k k(s\longrightarrow t).
\]
Thus $A$ is a finite-dimensional basic algebra.
Let $P_1,P_2$ be the indecomposable projective $B$-modules at the two
vertices, and let $P_{i,s},P_{i,t}$ be the projective $A$-modules at
the corresponding vertices.  Define
\[
 h=[P_1]-2[P_2],\qquad
 p=[P_{1,s}]-2[P_{2,s}],\qquad
 q=[P_{1,t}]-2[P_{2,t}],\qquad
 w=p+q.
\]
Then we prove the following theorem, which yields a counterexample
to both conjectures.

\begin{maintheorem}\label{thm:counterexample}
For the algebra $A$ and the weight $w$ above,
\[
 [w]_{\mathrm{TF}}
 =[w]_{\mathrm{TF}}^\circ
 =\R_{>0}p+\R_{>0}q.
\]
For every positive integer $m$, the canonical decomposition of $mw$
has one summand:
\[
 \ind(mw)=\{mw\}.
\]
Consequently,
\[
 \Cone^\circ\bigl(\ind(\N w)\bigr)
 =\mathcal C^\circ(w)=\R_{>0}w
 \ \subsetneq\ [w]_{\mathrm{TF}}^\circ.
\]
\end{maintheorem}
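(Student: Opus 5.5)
The proof splits into two independent parts. First we compute the TF class of $w$. Then we show that $mw$ is generically indecomposable for every $m\ge1$. The final chain of equalities then follows formally: $\ind(\N w)=\{mw\}$, so $\Cone^\circ(\ind(\N w))=\R_{>0}w=\mathcal C^\circ(w)$. This is a one-dimensional cone strictly inside the two-dimensional open cone $\R_{>0}p+\R_{>0}q$.

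\emph{TF class.} We use the description of TF equivalence via semistable torsion pairs. Two weights are TF equivalent exactly when they have the same torsion classes $\overline{\mathcal T}_\theta$ and torsionfree classes $\overline{\mathcal F}_\theta$, which are defined by the signs of $\theta$ on factor and sub modules.

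For $\theta=ap+bq$ with $a,b\geq0$, an $A$-module is a pair $M_s\to M_t$ of $B$-modules, and $\theta(M)=a\,h(M_s)+b\,h(M_t)$ with $h(\dim V)=\dim V_1-2\dim V_2$. Over $B$, the TF class of $h$ is known from the hereditary case. The ray $\R_{>0}h$ is a TF class whose semistable modules are the $h$-semistable Kronecker modules with $\dim V_2=\dim V_1/2$; there are many of these, since $(2,1)$ is an imaginary root for three arrows.

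We then check directly that for $a,b>0$ the classes $\overline{\mathcal T}_{ap+bq}$ and $\overline{\mathcal F}_{ap+bq}$ do not depend on $(a,b)$. The idea is that a module $M$ lies in $\overline{\mathcal T}$ if and only if both $M_s$ and $M_t$ lie in the torsion class $\overline{\mathcal T}_h$ of $B$, and similarly for $\overline{\mathcal F}$. To prove this, use the exact sequences $0\to (0\to M_t)\to M\to (M_s\to 0)\to 0$ and the sub/factor structure: a factor of $M$ restricts to factors of $M_t$ and (images) of $M_s$. Nonnegativity of $h$ on factors of both components then gives the claim. This yields $\R_{>0}p+\R_{>0}q\subseteq[w]_{\mathrm{TF}}$.

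For the reverse inclusion, we show that any $\theta$ outside this open cone separates some module. Test modules of the form $(X\to0)$, $(0\to X)$ and $(X\xrightarrow{=}X)$, with $X$ ranging over $h$-stable bricks and the simples of $B$, force $\theta$ to lie in $\mathrm{span}(p,q)$ with positive coefficients. Concretely, $\theta$ restricted to the $s$-face and the $t$-face must each be TF equivalent to $h$ over $B$. This uses that $[h]_{\mathrm{TF}}=\R_{>0}h$ over $B$, by the hereditary equality case. The cone is open in its span, so it equals its relative interior.

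\emph{Generic indecomposability.} This is the main obstacle. A presentation of weight $mw$ is a map $f\colon P_{2,s}^{2m}\oplus P_{2,t}^{2m}\to P_{1,s}^m\oplus P_{1,t}^m$. By Derksen--Fei, $mw$ decomposes generically if and only if it is a sum $g_1+g_2$ with $E(g_1,g_2)=E(g_2,g_1)=0$ for general presentations. The plan is to describe a general presentation explicitly. Its cokernel is a representation $M_s\to M_t$ in which $M_s$ is a general $B$-module of dimension $(m,\ast)$ coming from $h^m$, that is, a general map $k^{2m}\otimes(\text{3 arrows})$. The general presentation of $mh$ over $B$ is $\ker$-type: a general $P_2^{2m}\to P_1^m$ is injective with cokernel of dimension vector $(m,3m-2m)=(m,m)$. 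So the general cokernel is a $B$-module $N_s$ of dimension $(m,m)$, with $N_t$ likewise, together with a $B$-map $N_s\to N_t$ coming from the generic connecting component in $\Hom(P_{2,t}^{2m},P_{1,s}^m)=0$, and the $v$-component $P_{2,s}\to P_{1,t}$.

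The key computation is to show that the general such representation is a brick, or at least has no nontrivial idempotent endomorphism in the presentation category. Equivalently, we rule out every decomposition $mw=g_1+g_2$ with $E$ vanishing both ways. Writing $g_i=x_ip+y_iq+(\text{other})$, the candidate splittings are $mw=mp+mq$ and their refinements. The first step is to show that $E(mp,mq)\neq0$ or $E(mq,mp)\neq0$. For this we compute $E(g_1,g_2)=\Hom(\Coker f_1,\tau\Coker f_2)$-type terms using $\Hom_B(N,\nu N')$, and find that the $v$-components give $3m^2$-dimensional freedom versus a smaller dimension of homotopies, so $E\neq0$. The second step excludes summands not in $\mathrm{span}(p,q)$ by a $g$-vector sign argument: each summand must be TF related to $w$ by Asai--Iyama's inclusion, hence lies in the closure of $[w]_{\mathrm{TF}}$. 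The third step rules out splittings $m=m_1+m_2$ along $w$ by showing $E(w,w)\neq0$, i.e.\ $w$ is not rigid; this holds because the $(m,m)$ modules over $3$-Kronecker form a positive-dimensional family. I expect the third step, the proof that the generic presentation is indecomposable rather than merely non-rigid, to require an explicit normal form and an endomorphism calculation, and that is where most of the work will be.
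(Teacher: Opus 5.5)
Your TF computation is fine in outline. Your reverse inclusion is a valid alternative to the paper's test module $T=(k^2,k)$ plus the simples $S_{1,s},S_{1,t}$. You restrict $\eta$ to modules $(V\to0)$ and $(0\to V)$, so that its $s$- and $t$-parts are TF equivalent to $h$ over the hereditary $B$, where $\ind(h)=\{h\}$ gives $[h]_{\mathrm{TF}}=\R_{>0}h$. One caveat: quotients of $M_t$ are not quotients of $M$. So proving $M_t\in\overline{\mathcal T}_h$ needs the paper's extension of a quotient of $\Coker f$ by an image of $M_s$ (and dually $\Ker f$ for $\overline{\mathcal F}$).

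The genuine gap is generic indecomposability, and enumerating $E$-orthogonal splittings does not close it. After your Step~2 the candidate summands are all $a_ip+b_iq$ with $a_i,b_i\in\mathbb Z_{\ge0}$ and $\sum a_i=\sum b_i=m$. You would therefore also have to exclude, for example, $2w=(2p+q)\oplus q$ or $3w=(2p+q)\oplus(p+2q)$, and nothing in your plan computes $E(ap+bq,cp+dq)$.

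Step~3 is also wrong as argued. $E(w,w)$ is a minimum over \emph{independently} chosen pairs $(f,d)$. Hence neither non-rigidity of a general presentation nor a positive-dimensional family of $(m,m)$-modules forces $E(w,w)\neq0$. A positive-dimensional family of mutually $\Ext$-orthogonal cokernels is exactly what produces $2g=g\oplus g$, as for $g=[P_1]-[P_2]$ over the two-arrow Kronecker quiver. Your Step~1 conclusion is right: the paper finds $E_A(mq,mp)=m^2$.

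Your model of $\mathcal H_A(mw)$ is also off. The vanishing block is $\Hom_A(P_{2,s},P_{1,t})=0$, whereas $\Hom_A(P_{2,t},P_{1,s})\cong\Hom_B(P_2,P_1)\cong k^3$. A general cokernel has dimension vector $(m,m,2m,2m)$, so its $t$-part is $(2m,2m)$, not $(m,m)$. Finally, you explicitly leave the decisive computation undone.

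The missing idea is that no enumeration is needed: it suffices to produce \emph{one} injective $d_0\in\mathcal H_A(mw)$ with brick cokernel. Injectivity is an open condition, and $d\mapsto\dim_k\End_A(\Coker d)$ is upper semicontinuous on the injective locus. Since $\mathcal H_A(mw)$ is an irreducible affine space, a general $d$ is then injective with $\End_A(\Coker d)=k$. Source and target share no indecomposable summand, so $P_d$ is the minimal projective resolution of a brick and hence indecomposable.

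The paper's $d_0$ resolves $M_m=(X\xrightarrow{J}Y)$ with $X=(I_m,D,S)$ and $Y=\bigl(I_{2m},\operatorname{diag}(D,D'),\bigl(\begin{smallmatrix}S&I_m\\0&S\end{smallmatrix}\bigr)\bigr)$. The $2m$ distinct eigenvalues of $\operatorname{diag}(D,D')$, the off-diagonal $I_m$ and the cyclic shift $S$ force $\End_B(Y)=k$, hence $\End_A(M_m)=k$. The terms $P_{2,s}^{2m}\oplus P_{2,t}^{2m}\to P_{1,s}^m\oplus P_{1,t}^m$ of its minimal resolution are read off from $\Ext^j_A(M_m,S_v)$. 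These are computed through $0\to L_sX\to M_m\to L_tZ\to0$ and the adjunctions for $L_s,L_t$.
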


In this paper, we proceed as follows.
In Section~2, we compute the TF equivalence class of $w$ in
Corollary~\ref{prop:tf-class}.
In Section~\ref{sec:schur-resolution}, we construct, for each positive
integer $m$, a brick $M_m$ whose minimal projective resolution
has $g$-vector $mw$.
In Section~4, we use these resolutions and upper semicontinuity of
endomorphism dimensions to prove generic indecomposability of $mw$
in Proposition~\ref{prop:generic-ind}, completing the proof of
Theorem~\ref{thm:counterexample}.
In Section~5, we compute the directed $E$-invariants of $mp$ and $nq$.

We recall the definitions and notation used in the proofs.

\subsection{Weights and TF equivalence}

Consider the categories $\operatorname{mod}\Lambda$ and
$\operatorname{proj}\Lambda$ of finite-dimensional left modules and
finitely generated projective modules, respectively.
For a module $N$ and an integer $r\geq0$,
$N^r$ denotes the direct sum of $r$ copies of $N$.  We write
\[
 K_0(\operatorname{proj}\Lambda)_\R
 =K_0(\operatorname{proj}\Lambda)\otimes_\mathbb Z\R.
\]
The pairing
\[
 \langle[P],[M]\rangle=\dim_k\Hom_\Lambda(P,M)
 \qquad(P\in\operatorname{proj}\Lambda,\ M\in\operatorname{mod}\Lambda)
\]
extends linearly to real classes.  For a weight
$\theta\in K_0(\operatorname{proj}\Lambda)_\R$, we abbreviate
$\theta(M)=\langle\theta,[M]\rangle$.

The two non-strict semistable classes are
\[
 \begin{aligned}
 \overline{\mathcal T}_\theta
 &=\{M:\theta(N)\geq0
          \text{ for every quotient }N\text{ of }M\},\\
 \overline{\mathcal F}_\theta
 &=\{M:\theta(L)\leq0
          \text{ for every submodule }L\text{ of }M\}.
 \end{aligned}
\]
Their intersection
$\mathcal W_\theta=\overline{\mathcal T}_\theta
\cap\overline{\mathcal F}_\theta$ is the category of
$\theta$-semistable modules.  Equivalently, $M$ is semistable when
$\theta(M)=0$ and $\theta(L)\leq0$ for every submodule $L\subseteq M$.
Two weights $\theta,\eta\in K_0(\operatorname{proj}\Lambda)_\R$
are TF equivalent when their two non-strict classes coincide.
For $\theta\in K_0(\operatorname{proj}\Lambda)_\R$, its TF
equivalence class is
\[
 [\theta]_{\mathrm{TF}}
 =\{\eta\in K_0(\operatorname{proj}\Lambda)_\R:
   \overline{\mathcal T}_\eta=\overline{\mathcal T}_\theta,\
   \overline{\mathcal F}_\eta=\overline{\mathcal F}_\theta\}.
\]

\subsection{Presentations and canonical decomposition}

Choose representatives $P_1,\ldots,P_n$ of the indecomposable
projective $\Lambda$-modules.  For
$g=\sum_i c_i[P_i]\in K_0(\operatorname{proj}\Lambda)$, define
\[
 P_g^+=\bigoplus_{c_i>0}P_i^{c_i},\qquad
 P_g^-=\bigoplus_{c_i<0}P_i^{-c_i}.
\]
The source and target have no common indecomposable projective
summand.  The normalized presentation space of $g$ is the affine space
\[
 \mathcal H_\Lambda(g)=\Hom_\Lambda(P_g^-,P_g^+).
\]
A property of a general presentation means a property holding
on a nonempty Zariski open subset, such a subset being dense in
$\mathcal H_\Lambda(g)$.
To $d\in\mathcal H_\Lambda(g)$ we associate the two-term complex
\[
 P_d=[P_g^-\xrightarrow{d}P_g^+]
 \quad\text{in degrees }-1,0,
 \qquad [P_d]=[P_g^+]-[P_g^-]=g.
\]
The bounded homotopy category of projective modules is denoted by
$\Kb(\operatorname{proj}\Lambda)$, and the bounded derived category
by $\Db(\operatorname{mod}\Lambda)$.

For $g\in K_0(\operatorname{proj}\Lambda)$, it is generically
indecomposable if a general presentation in $\mathcal H_\Lambda(g)$
is indecomposable as a two-term complex.
The canonical decomposition
$g=g_1\oplus\cdots\oplus g_r$ records the classes of the
indecomposable summands of a general presentation:
\[
 P_d\simeq P_{d_1}\oplus\cdots\oplus P_{d_r},
 \qquad d_i\in\mathcal H_\Lambda(g_i).
\]
It exists and is unique up to reordering
\cite[Definition~4.3 and Theorem~4.4]{DF};
see also \cite[Definition~2.18 and Proposition~2.21]{AI}.
Multiplicities are retained in this decomposition, whereas
$\ind(g)=\{g_1,\ldots,g_r\}$ is a set.  We set $\ind(0)=\varnothing$.
For a module $M$ with a minimal projective presentation
$P^-\to P^+\to M\to0$, its $g$-vector is
$g(M)=[P^+]-[P^-]$.

\subsection{Modules over the tensor product}

\begin{lemma}\label{lem:arrow-category}
There is an equivalence of categories
\[
 \operatorname{mod}A\simeq\Mor(\operatorname{mod}B).
\]
It identifies an $A$-module with a morphism of $B$-modules
$M=(X\xrightarrow{f}Y)$, with spaces $X_i$ and $Y_i$ at
$(i,s)$ and $(i,t)$.  A morphism from $(X\xrightarrow{f}Y)$ to
$(X'\xrightarrow{f'}Y')$ is a pair of $B$-module maps $(u,v)$
satisfying $vf=f'u$.
\end{lemma}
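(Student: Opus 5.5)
The equivalence $\operatorname{mod}A\simeq\Mor(\operatorname{mod}B)$ follows from unpacking what a representation of the bound quiver $(Q_A,I)$ is. I first write down the functor on objects. A representation of $Q_A$ assigns spaces $X_i$ to $(i,s)$ and $Y_i$ to $(i,t)$. The arrows $\alpha_{j,s}$, for $j=1,2,3$, give linear maps $X_1\to X_2$, so the $s$-row is a representation $X$ of $Q_B$, that is, a $B$-module. Likewise the $t$-row gives a $B$-module $Y$. The vertical arrows give linear maps $f_1=v_1\colon X_1\to Y_1$ and $f_2=v_2\colon X_2\to Y_2$. The three relations $v_2\alpha_{j,s}=\alpha_{j,t}v_1$ say exactly that the pair $f=(f_1,f_2)$ commutes with the action of each arrow $\alpha_j$. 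Since $B=kQ_B$ is generated by the vertex idempotents and these arrows, this is exactly the condition that $f\colon X\to Y$ is a $B$-module homomorphism. Conversely, a $B$-linear map $X\to Y$ determines a representation of $Q_A$ satisfying $I$. These two assignments are mutually inverse on objects.

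Next I treat morphisms. A morphism of $Q_A$-representations is a family of linear maps, one at each of the four vertices, commuting with all arrows. Commutation with the horizontal arrows in the $s$-row says the maps at $(1,s),(2,s)$ form a $B$-map $u\colon X\to X'$. Similarly the maps at the $t$-vertices form a $B$-map $v\colon Y\to Y'$. Commutation with $v_1,v_2$ is the square $vf=f'u$. So morphisms correspond bijectively and compatibly with composition, and the functor is an isomorphism, in particular an equivalence. Finally, I invoke the standard equivalence between finite-dimensional modules over $kQ_A/I$ and finite-dimensional representations of $(Q_A,I)$ for an admissible ideal. Here $I$ lies in the square of the arrow ideal, and $A$ is finite-dimensional because $Q_A$ is acyclic.

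The only point needing care is the identification $A\cong B\otimes_k k(s\to t)$, which the paper asserts. It is not needed for the equivalence itself, but it confirms that $I$ is the correct ideal. One can check it by comparing dimensions: both sides have dimension $5\cdot 3=15$, since $\dim B=5$ and $\dim k(s\to t)=3$. One then matches the basis of paths modulo $I$ with elementary tensors. Alternatively, one can observe directly that modules over $B\otimes_k k(s\to t)$ are $B$-modules with an action of $k(s\to t)$ commuting with $B$, which is again a morphism of $B$-modules. I expect no real obstacle. The argument is a routine translation, and the main thing to get right is the orientation of the relations, which encode $B$-linearity of $f$.
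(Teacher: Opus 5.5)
Your proposal is correct and follows essentially the same route as the paper: the two rows give the $B$-modules $X$ and $Y$, the three commutativity relations are exactly $B$-linearity of the vertical map $f$, and morphisms of representations unpack to pairs $(u,v)$ of $B$-maps with $vf=f'u$. Your additional remarks are accurate but go beyond what the paper's proof spells out: the module--representation dictionary for $kQ_A/I$, and the dimension count $\dim_k A=15$ supporting $A\cong B\otimes_k k(s\to t)$.
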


\begin{proof}
The horizontal arrows of an $A$-representation give two
$B$-modules $X$ and $Y$.  The three relations say exactly that
the vertical maps $f_i:X_i\to Y_i$ commute with all three
horizontal arrows, so they form a $B$-module map $f:X\to Y$.
Conversely, every such map defines an $A$-representation
satisfying the relations.  The vertexwise conditions on a map of
representations say that its two horizontal components are
$B$-module maps and that $vf=f'u$.  These constructions are
inverse on objects and morphisms.
\end{proof}

For $M=(X\xrightarrow{f}Y)$, we order its dimension vector as
\[
 \dim M=(\dim_kX_1,\dim_kX_2,\dim_kY_1,\dim_kY_2),
\]
corresponding to $((1,s),(2,s),(1,t),(2,t))$.

For $N\in\operatorname{mod}B$, put
\[
 L_sN=(N\xrightarrow{I_N}N),\qquad L_tN=(0\longrightarrow N).
\]
Maps from these modules are determined by their components at $s$
and $t$, respectively, giving
\[
 \Hom_A(L_sN,M)\simeq\Hom_B(N,X),\qquad
 \Hom_A(L_tN,M)\simeq\Hom_B(N,Y).
\]
The projective $B$-modules have dimension vectors
$\dim P_1=(1,3)$ and $\dim P_2=(0,1)$, and satisfy
$\Hom_B(P_i,N)\simeq N_i$.  Thus
\[
 P_{i,s}=L_sP_i,\qquad P_{i,t}=L_tP_i
\]
are the four indecomposable projective $A$-modules, with
\[
 \Hom_A(P_{i,s},M)\simeq X_i,\qquad
 \Hom_A(P_{i,t},M)\simeq Y_i.
\]
If $S_i$ is the simple $B$-module at vertex $i$, the simple
$A$-modules are
$S_{i,s}=(S_i\to0)$ and $S_{i,t}=(0\to S_i)$.
In particular, for
$\theta=\sum_i(a_{i,s}[P_{i,s}]+a_{i,t}[P_{i,t}])$ one has
\[
 \theta(M)=\sum_i
 \bigl(a_{i,s}\dim_kX_i+a_{i,t}\dim_kY_i\bigr).
\]
The weights used in the theorem therefore satisfy
\[
 h(N)=\dim_kN_1-2\dim_kN_2,\qquad
 (ap+bq)(X\xrightarrow{f}Y)=a\,h(X)+b\,h(Y)
 \quad(a,b\in\R).
\]
We call a module a brick when its endomorphism algebra is $k$.

\section{The TF equivalence class}

We compute the full TF equivalence class of $w$, proving the first
assertion of Theorem~\ref{thm:counterexample}.  The key step is to
express the semistable torsion and torsion-free classes over $A$
in terms of their counterparts over $B$.

For a full subcategory $\mathcal C\subseteq\operatorname{mod}B$, let
$\Mor(\mathcal C)$ denote the full subcategory of $\operatorname{mod}A$
whose objects are morphisms $f\colon X\to Y$ with $X,Y\in\mathcal C$.
The class $\overline{\mathcal T}_h$ is closed under quotients
and extensions, and $\overline{\mathcal F}_h$ under submodules and
extensions \cite[Subsection~2.1 and Definition~2.4(a)]{AI}.

\begin{lemma}\label{lem:torsion-morphism}
For all real numbers $a,b>0$, one has
\[
 \overline{\mathcal F}_{ap+bq}=\Mor(\overline{\mathcal F}_h),
 \qquad
 \overline{\mathcal T}_{ap+bq}=\Mor(\overline{\mathcal T}_h),
 \qquad
 \mathcal W_{ap+bq}=\Mor(\mathcal W_h).
\]
\end{lemma}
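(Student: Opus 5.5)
The plan is to prove the two equalities for $\overline{\mathcal F}$ and $\overline{\mathcal T}$ directly from the definitions, using only Lemma~\ref{lem:arrow-category}, the formula $\theta(X\xrightarrow{f}Y)=a\,h(X)+b\,h(Y)$ for $\theta=ap+bq$, and the closure of $\overline{\mathcal T}_h$ under quotients and extensions and of $\overline{\mathcal F}_h$ under submodules and extensions. The statement for $\mathcal W$ then follows by intersecting, since $\Mor(\overline{\mathcal T}_h)\cap\Mor(\overline{\mathcal F}_h)=\Mor(\mathcal W_h)$.

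The basic observation is this. By Lemma~\ref{lem:arrow-category}, a submodule of $M=(X\xrightarrow{f}Y)$ is a pair of $B$-submodules $X'\subseteq X$ and $Y'\subseteq Y$ with $f(X')\subseteq Y'$. The corresponding quotient is $(X/X'\to Y/Y')$.

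\emph{Easy inclusions.} Suppose $X,Y\in\overline{\mathcal F}_h$ and $(X'\to Y')$ is a submodule of $M$. Then $h(X')\le 0$ and $h(Y')\le 0$, so $\theta(X'\to Y')\le 0$ because $a,b>0$. Hence $\Mor(\overline{\mathcal F}_h)\subseteq\overline{\mathcal F}_\theta$. Dually, every quotient of $M$ has the form $(X''\to Y'')$ with $X''$ a quotient of $X$ and $Y''$ a quotient of $Y$. This gives $\Mor(\overline{\mathcal T}_h)\subseteq\overline{\mathcal T}_\theta$.

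\emph{Reverse inclusion for $\overline{\mathcal F}$.} Let $M\in\overline{\mathcal F}_\theta$.

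First, for any $Y'\subseteq Y$, the pair $(0\to Y')$ is a submodule of $M$. Its weight is $b\,h(Y')\le0$, so $h(Y')\le 0$ and $Y\in\overline{\mathcal F}_h$.

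Second, for any $K\subseteq\Ker f$, the pair $(K\to 0)$ is a submodule of $M$, so $h(K)\le 0$. Thus $\Ker f\in\overline{\mathcal F}_h$.

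Now take any $X'\subseteq X$. It sits in the exact sequence $0\to X'\cap\Ker f\to X'\to f(X')\to 0$. The left term is a submodule of $\Ker f$ and the right term is a submodule of $Y$, so both lie in $\overline{\mathcal F}_h$. Hence $h(X')=h(X'\cap\Ker f)+h(f(X'))\le 0$, and $X\in\overline{\mathcal F}_h$.

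\emph{Reverse inclusion for $\overline{\mathcal T}$.} This is the dual argument. Let $M\in\overline{\mathcal T}_\theta$.

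First, $(0\to Y)$ is a submodule, so $(X\to 0)$ is a quotient of $M$. Every quotient $X''$ of $X$ gives a further quotient $(X''\to0)$ with weight $a\,h(X'')\ge0$. Hence $X\in\overline{\mathcal T}_h$.

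Second, let $Y''=Y/Y'$ be a quotient of $Y$. The image $I$ of $f(X)$ in $Y''$ is a quotient of $X$, so $h(I)\ge 0$. The cokernel $Y''/I=Y/(Y'+f(X))$ is obtained as the quotient of $M$ by the submodule $(X\to Y'+f(X))$, namely $(0\to Y/(Y'+f(X)))$. Every quotient of this cokernel again gives a quotient of $M$ concentrated at $t$, so $Y''/I\in\overline{\mathcal T}_h$ and in particular $h(Y''/I)\ge0$. Therefore $h(Y'')=h(I)+h(Y''/I)\ge 0$, and $Y\in\overline{\mathcal T}_h$.

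The main obstacle is the factor at the ``wrong end'' in each case: $X$ for $\overline{\mathcal F}$ and $Y$ for $\overline{\mathcal T}$. Test submodules or quotients there necessarily involve both components, so one cannot isolate the weight of a single component. The plan handles this by splitting along $\Ker f$, respectively along the image of $f$, and using additivity of $h$ on short exact sequences.
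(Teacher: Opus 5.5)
Your proposal is correct and follows the paper's proof essentially step for step. For $\overline{\mathcal F}$ you use the same test subobjects $(0\to Y')$ and $(K\to 0)$ and the same splitting $0\to X'\cap\Ker f\to X'\to f(X')\to0$. For $\overline{\mathcal T}$ you split $Y/Y'$ along the image of $f$ exactly as the paper does; the only cosmetic difference is that you take the quotient $(0\to Y/(Y'+f(X)))$ of $M$ directly, where the paper first shows $\Coker f\in\overline{\mathcal T}_h$. The $\mathcal W$ case is obtained by intersection in both.
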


\begin{proof}
Write $\theta=ap+bq$ and $M=(X\xrightarrow{f}Y)$.  A subobject of
$M$ is a pair of $B$-submodules $X'\subseteq X$, $Y'\subseteq Y$
with $f(X')\subseteq Y'$, together with the restricted morphism
$X'\to Y'$.  Its quotient is
$(X/X'\to Y/Y')$, with the morphism induced by $f$.

Suppose that $M\in\overline{\mathcal F}_\theta$.
For every $B$-submodule $Y'\subseteq Y$, the object $(0\to Y')$
is a subobject of $M$.  Hence $b h(Y')\leq0$, and positivity of $b$
gives $Y\in\overline{\mathcal F}_h$.
Every submodule $K\subseteq\Ker f$ gives a subobject $(K\to0)$,
so $a h(K)\leq0$ and $\Ker f\in\overline{\mathcal F}_h$.
For any $X'\subseteq X$, the exact sequence of $B$-modules
\[
 0\longrightarrow X'\cap\Ker f\longrightarrow X'
   \xrightarrow{\,f\,} f(X')\longrightarrow0
\]
has its left term in $\overline{\mathcal F}_h$ and its right term
a submodule of $Y$.  Additivity therefore gives
\[
 h(X')=h(X'\cap\Ker f)+h(f(X'))\leq0.
\]
Thus $X\in\overline{\mathcal F}_h$.
Conversely, if $X,Y\in\overline{\mathcal F}_h$, then every
subobject $(X'\to Y')$ of $M$ has weight
$a h(X')+b h(Y')\leq0$.  This proves the first equality.

Suppose now that $M\in\overline{\mathcal T}_\theta$.
For each $X'\subseteq X$, there is an epimorphism
\[
 (X\xrightarrow{f}Y)\longrightarrow(X/X'\to0)
\]
whose component maps are the quotient $X\to X/X'$ and the zero
map $Y\to0$; its kernel is $(X'\to Y)$.
Consequently $a h(X/X')\geq0$ for every $X'$, so
$X\in\overline{\mathcal T}_h$.
Set $C=\Coker f$.  Each quotient $C\twoheadrightarrow C'$ yields
an epimorphism $M\to(0\to C')$, whose target component is the
composite $Y\to C\to C'$.  This composite annihilates $\im f$,
so the two component maps form a morphism of $A$-modules.
Its weight gives $b h(C')\geq0$ for every quotient $C'$ of $C$,
and hence $C\in\overline{\mathcal T}_h$.

To recover the condition on $Y$, let $Y'\subseteq Y$ be arbitrary.
The exact sequence
\[
 0\longrightarrow(\im f+Y')/Y'
 \longrightarrow Y/Y'
 \longrightarrow Y/(\im f+Y')\longrightarrow0
\]
has left term a quotient of $X$ and right term a quotient of $C$.
Indeed, the map $X\to(\im f+Y')/Y'$ induced by $f$ is surjective,
and $Y\to Y/(\im f+Y')$ factors through $C$.
Both outer terms have nonnegative $h$-weight, so
$h(Y/Y')\geq0$.  Therefore $Y\in\overline{\mathcal T}_h$.
Conversely, if $X,Y\in\overline{\mathcal T}_h$, every quotient
$(X/X'\to Y/Y')$ of $M$ has nonnegative $\theta$-weight.
This proves the second equality.

Intersecting the first two equalities gives
\[
 \mathcal W_\theta
 =\Mor(\overline{\mathcal F}_h\cap\overline{\mathcal T}_h)
 =\Mor(\mathcal W_h).
\]
\end{proof}

\begin{corollary}\label{prop:tf-class}
The full TF equivalence class of $w$ is
\[
 [w]_{\mathrm{TF}}
 =\{ap+bq\mid a,b>0\}
 =\operatorname{relint}(\mathbb R_{\geq0}p+\mathbb R_{\geq0}q).
\]
It has dimension $2$, its closure is
$\mathbb R_{\geq0}p+\mathbb R_{\geq0}q$, and
\[
 \mathcal W_w=\Mor(\mathcal W_h).
\]
\end{corollary}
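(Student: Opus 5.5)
The inclusion $\{ap+bq\mid a,b>0\}\subseteq[w]_{\mathrm{TF}}$ is immediate from Lemma~\ref{lem:torsion-morphism}. For all $a,b>0$ the two non-strict classes of $ap+bq$ are $\Mor(\overline{\mathcal T}_h)$ and $\Mor(\overline{\mathcal F}_h)$. These do not depend on $a,b$, and taking $a=b=1$ recovers $w$. The same lemma gives $\mathcal W_w=\Mor(\mathcal W_h)$. The two-dimensionality of the class and the description of its closure follow once $p,q$ are seen to be linearly independent, which is clear because they involve disjoint sets of projectives. So the real content is the reverse inclusion: any $\eta$ that is TF equivalent to $w$ must lie in $\R_{>0}p+\R_{>0}q$.

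To prove this I would write $\eta=\sum_i(a_{i,s}[P_{i,s}]+a_{i,t}[P_{i,t}])$ and test it against modules that lie in $\mathcal W_w=\mathcal W_\eta$, since TF-equivalent weights share their semistable category.

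\begin{itemize}
\item First I need some modules in $\mathcal W_h$ over the Kronecker algebra $B$. One is an $h$-semistable module of dimension vector $(2,1)$, for instance the cokernel of a general map $P_2\to P_1$, or a general representation $k^2\to k$. I would check semistability directly: the only subrepresentations with positive $h$ have dimension vector $(1,0)$ or $(2,0)$, and these are excluded when the arrows act generically, say with $\alpha_1,\alpha_2$ the two coordinate functionals.
\item Call this module $N$. The objects $L_sN=(N\xrightarrow{1}N)$ and $L_tN=(0\to N)$ lie in $\Mor(\mathcal W_h)$.
\item Vanishing of $\eta$ on $L_tN$ gives $2a_{1,t}+a_{2,t}=0$.
\item Vanishing on $L_sN$ gives $2(a_{1,s}+a_{1,t})+(a_{2,s}+a_{2,t})=0$, hence $2a_{1,s}+a_{2,s}=0$.
\end{itemize}
These two equations alone only cut the space down to dimension $2$ plus constraints. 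The space of all weights is four-dimensional, and the two equations leave exactly the span of $p$ and $q$. So $\eta=ap+bq$ for some real $a,b$.

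It remains to show $a,b>0$; I expect this sign analysis to be the main obstacle, though it stays elementary.

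\begin{itemize}
\item Take $N=S_1$, which has $h(S_1)=1$. Since $S_1$ is simple it lies in $\overline{\mathcal T}_h$, so $(0\to S_1)\in\Mor(\overline{\mathcal T}_h)=\overline{\mathcal T}_\eta$. This gives $b\,h(S_1)=b\geq0$.
\item Similarly $(S_1\to0)\in\overline{\mathcal T}_\eta$ gives $a\geq0$.
\item To exclude $b=0$, look at $(0\to S_2)$. It has weight $-2b$, and $S_2\notin\overline{\mathcal T}_h$ because $h(S_2)=-2<0$. If $b=0$, then $(0\to S_2)$ has every quotient of $\eta$-weight $0$, so it lies in $\overline{\mathcal T}_\eta$. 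But it does not lie in $\Mor(\overline{\mathcal T}_h)$, a contradiction.
\item Symmetrically, $(S_2\to0)$ excludes $a=0$.
\end{itemize}
Hence $a,b>0$, which completes the reverse inclusion and the identification of the class with the relative interior of $\R_{\geq0}p+\R_{\geq0}q$.
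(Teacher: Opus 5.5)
Your proof is correct and is essentially the paper's argument: an $h$-semistable module of dimension vector $(2,1)$, placed at $s$ and at $t$, forces $\eta\in\R p+\R q$, and simple modules then force $a,b>0$. The paper gets strict positivity from $S_{1,\epsilon}\notin\overline{\mathcal F}_\eta$, while you use $S_{2,\epsilon}\notin\overline{\mathcal T}_\eta$; since $S_{2,\epsilon}$ is simple, this alone gives $-2a<0$ and $-2b<0$, so your $\geq0$ step is redundant. One aside is wrong, though you never use it: the cokernel of a general map $P_2\to P_1$ has dimension vector $(1,2)$ and $h$-weight $-3$, so only your second choice, the representation $k^2\to k$, is $h$-semistable.
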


\begin{proof}
Lemma~\ref{lem:torsion-morphism}, applied also to $a=b=1$, shows
that every $ap+bq$ with $a,b>0$ is TF equivalent to $w$ and gives
the stated formula for $\mathcal W_w$.
Conversely, let $\eta\in[w]_{\mathrm{TF}}$ and write
\[
 \eta=x_s[P_{1,s}]+y_s[P_{2,s}]
      +x_t[P_{1,t}]+y_t[P_{2,t}].
\]
Take the $B$-module $T=(k^2,k)$ with arrow maps
$(1,0)$, $(0,1)$, and $0$.  Their common kernel is zero, so every
nonzero submodule $U=(U_1,U_2)$ has $U_2=k$ and
$h(U)=\dim_kU_1-2\leq0$.  Since $h(T)=0$, we have
$T\in\mathcal W_h$.  Thus $(T\to0)$ and $(0\to T)$ belong to
$\mathcal W_w=\mathcal W_\eta$, and their $\eta$-weights give
\[
 2x_s+y_s=0,\qquad 2x_t+y_t=0.
\]
It follows that $\eta=ap+bq$ with $a=x_s$ and $b=x_t$.

The simples $S_{1,s}$ and $S_{1,t}$ have $w$-weight $1$, so they
belong to $\overline{\mathcal T}_w$ but not to
$\overline{\mathcal F}_w$.  TF equivalence preserves these
conditions, giving $a=\eta(S_{1,s})>0$ and
$b=\eta(S_{1,t})>0$.

The vectors $p$ and $q$ are linearly independent, so the remaining
assertions follow from the dimension, closure, and relative
interior of the positive quadrant in their span.
\end{proof}

\section{Bricks and minimal projective resolutions}
\label{sec:schur-resolution}

We construct a brick with $g$-vector $mw$ for every positive integer
$m$.  Direct calculations of maps and extensions to the simple
modules determine the terms of its minimal projective resolution.

\subsection{A uniform family of bricks}

Fix $m\geq 1$.  Choose pairwise distinct elements
$d_1,\ldots,d_m\in k$, and put
\[
 D=\operatorname{diag}(d_1,\ldots,d_m),
 \qquad Se_i=e_{i+1},
\]
where the subscripts are taken modulo $m$.  Since $k$ is infinite, we
can choose $\lambda\in k$ outside the finite set
$\{d_i-d_j:1\leq i,j\leq m\}$.  The matrix
$D'=D+\lambda I_m$ then has pairwise distinct eigenvalues, and its
spectrum is disjoint from that of $D$.  Define $B$-modules by listing
their three arrow maps:
\begin{equation}\label{eq:schur-family}
 X=(I_m,D,S),\qquad
 Y=\left(I_{2m},
       \begin{pmatrix}D&0\\0&D'\end{pmatrix},
       \begin{pmatrix}S&I_m\\0&S\end{pmatrix}\right).
\end{equation}
At each of the two vertices, the map
\[
 J=\begin{pmatrix}I_m\\0\end{pmatrix}:k^m\longrightarrow k^{2m}
\]
intertwines the corresponding arrow maps of $X$ and $Y$.  It therefore
defines the $A$-module
\[
 M_m=(X\xrightarrow{J}Y),\qquad
 \dim M_m=(m,m,2m,2m).
\]

\begin{proposition}\label{prop:schur}
For every $m\geq1$, the module $M_m$ is a brick:
$\End_A(M_m)=k$.
\end{proposition}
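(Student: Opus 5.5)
By Lemma~\ref{lem:arrow-category}, an endomorphism of $M_m=(X\xrightarrow{J}Y)$ is a pair $(u,v)$ with $u\in\End_B(X)$, $v\in\End_B(Y)$ and $vJ=Ju$. So I will first show that $\End_B(Y)=k$ and then use the compatibility condition $vJ=Ju$ to pin down $u$. Showing $\End_B(X)=k$ as well is a useful sanity check but is not strictly needed.

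\emph{The module $Y$.} A $B$-endomorphism of $Y$ is a pair $(v_1,v_2)$ of $2m\times 2m$ matrices with $v_2\,\phi=\phi\,v_1$ for each of the three arrow maps $\phi$.
\begin{itemize}
\item The first arrow map is $I_{2m}$, which forces $v_1=v_2=:V$.
\item The second condition says $V$ commutes with $\operatorname{diag}(D,D')$. By the choice of $\lambda$, the $2m$ numbers $d_1,\dots,d_m,d_1+\lambda,\dots,d_m+\lambda$ are pairwise distinct. Hence $V$ is diagonal, say $V=\operatorname{diag}(E,F)$ with $E,F$ diagonal $m\times m$.
\item The third condition is commutation with $\begin{pmatrix}S&I_m\\0&S\end{pmatrix}$. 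Comparing blocks gives $ES=SE$, $FS=SF$, and, from the $(1,2)$ block, $E=F$.
\item A diagonal matrix commuting with the cyclic shift $S$ has all diagonal entries equal, since $SE S^{-1}$ cyclically permutes the diagonal. So $E=c I_m$.
\end{itemize}
Hence $V=cI_{2m}$ and $\End_B(Y)=k$. For $m=1$ the argument is the same; the commutation steps are simply trivial.

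\emph{The module $X$.} The same argument applies. The identity arrow forces $u_1=u_2=:U$. Commuting with $D$ (distinct $d_i$) makes $U$ diagonal. Commuting with $S$ then makes $U$ scalar. So $\End_B(X)=k$.

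\emph{Conclusion.} Given an endomorphism $(u,v)$, write $v=c\cdot 1_Y$. Since $J$ is injective, $Ju=vJ=cJ$ gives $u=c\cdot 1_X$. Therefore $\End_A(M_m)=k$. Equivalently, $(u,v)$ is a scalar multiple of the identity.

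There is no real obstacle here. The only points needing care are:
\begin{itemize}
\item the disjointness of the spectra of $D$ and $D'$, which guarantees that $\operatorname{diag}(D,D')$ has simple spectrum;
\item the off-diagonal block $I_m$ in the third arrow of $Y$, which is exactly what couples the two scalars $E$ and $F$. Without it, $Y$ would split as $X\oplus X'$ and $\End_B(Y)$ would be $k\times k$.
\end{itemize}
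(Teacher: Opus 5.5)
Your proof is correct and matches the paper's argument: the identity arrow forces equal matrices at both vertices, and the simple spectrum of $\operatorname{diag}(D,D')$ forces the endomorphism to be diagonal. The off-diagonal $I_m$ block then couples the two diagonal blocks, the cyclic shift makes them scalar, so $\End_B(Y)=k$, and injectivity of $J$ gives $u=c\cdot 1_X$. The side check that $\End_B(X)=k$ is, as you say, not needed.
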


\begin{proof}
The first arrow of $Y$ is the identity, so an endomorphism of $Y$ has
the same matrix $F$ at both vertices.  Commutation with its second
arrow forces $F$ to be diagonal, because all $2m$ diagonal entries of
$\operatorname{diag}(D,D')$ are distinct.  Write
$F=\operatorname{diag}(C_1,C_2)$, where $C_1$ and $C_2$ are diagonal
$m\times m$ matrices.  Commutation with the third arrow gives
\[
 C_1S=SC_1,\qquad C_2S=SC_2,\qquad C_1=C_2.
\]
A diagonal matrix commuting with the cyclic permutation $S$ has all
its diagonal entries equal: applying the commutation relation to
$e_i$ identifies the entries at $i$ and $i+1$.  Thus
$C_1=C_2=cI_m$, and $\End_B(Y)=k$.

An endomorphism of $M_m$ is a pair $(U,V)$ with
$U\in\End_B(X)$, $V\in\End_B(Y)$, and $VJ=JU$.  The preceding
calculation gives $V=cI_Y$.  Since $J$ is injective at both vertices,
the equality $JU=cJ$ implies $U=cI_X$.  Every scalar gives such a
pair, proving the assertion.
\end{proof}

The quotient of $Y$ by the image of $J$ is
$Z=(I_m,D',S)$.  The embedding and quotient therefore give a short
exact sequence in $\operatorname{mod}A$:
\begin{equation}\label{eq:schur-extension}
 0\longrightarrow L_sX
 \xrightarrow{(I_X,J)} M_m
 \longrightarrow L_tZ\longrightarrow0.
\end{equation}
\subsection{Hom and Ext to the simples}

Let $N$ be a $B$-module of dimension vector $(r,r)$, where $r\geq0$,
whose first arrow is $I_r$ in chosen bases.  A morphism from $N$ to
$S_1=(k\to0)$ consists of an arbitrary linear map $k^r\to k$ at
vertex $1$ and the zero map at vertex $2$; all three arrow
conditions are satisfied.  Hence
$\dim_k\Hom_B(N,S_1)=r$.  A morphism to $S_2=(0\to k)$ consists
of a map $v:k^r\to k$ at vertex $2$.  The first arrow condition
is $vI_r=0$, so $v=0$ and $\Hom_B(N,S_2)=0$.

The acyclic path algebra $B$ is hereditary, and its Euler identity
for finite-dimensional modules $U,V$ is
\[
 \begin{aligned}
 \dim_k\Hom_B(U,V)-\dim_k\Ext_B^1(U,V)
 &=\langle\dim U,\dim V\rangle_B,\\
 \langle(x_1,x_2),(y_1,y_2)\rangle_B
 &=x_1y_1+x_2y_2-3x_1y_2.
 \end{aligned}
\]
The vertex and arrow contributions give
\[
 \langle(r,r),(1,0)\rangle_B=r,
 \qquad
 \langle(r,r),(0,1)\rangle_B=r-3r=-2r.
\]
Subtracting these values from the Hom dimensions and using
heredity gives
\begin{equation}\label{eq:schur-b-simple-ext}
\begin{array}{c|cc}
 &S_1&S_2\\ \hline
 \dim_k\Hom_B(N,-)&r&0\\
 \dim_k\Ext_B^1(N,-)&0&2r
\end{array}
\qquad
 \Ext_B^j(N,S_i)=0\quad(j\geq2).
\end{equation}
In particular, this calculation applies to $X$ and $Z$, with $r=m$.

For a $B$-module $N$ and an $A$-module
$R=(R_s\xrightarrow{\rho}R_t)$, the evaluation adjunctions are
\begin{equation}\label{eq:schur-adjunction}
 \Hom_A(L_sN,R)\cong\Hom_B(N,R_s),\qquad
 \Hom_A(L_tN,R)\cong\Hom_B(N,R_t).
\end{equation}
A morphism from $(N\xrightarrow{I_N}N)$ is determined by its
$s$-component $u$, with $t$-component $\rho u$, and a morphism from
$(0\to N)$ is determined by its $t$-component.  Both evaluation
functors are exact.  The adjunctions therefore show that $L_s$ and
$L_t$ send projective $B$-modules to projective $A$-modules.
The functors $L_s$ and $L_t$ are themselves exact, so applying
either functor to a projective resolution of $N$ gives a projective
resolution over $A$.  Using \eqref{eq:schur-adjunction} in each
degree yields, for every $j\geq0$,
\begin{equation}\label{eq:schur-derived-adjunction}
 \Ext_A^j(L_sN,R)\cong\Ext_B^j(N,R_s),\qquad
 \Ext_A^j(L_tN,R)\cong\Ext_B^j(N,R_t),
\end{equation}
where $\Ext^0$ denotes $\Hom$.

The $t$-component of $S_{i,s}$ and the $s$-component of $S_{i,t}$
are zero.  Thus \eqref{eq:schur-derived-adjunction} gives
\[
 \Ext_A^j(L_tZ,S_{i,s})=0,
 \qquad
 \Ext_A^j(L_sX,S_{i,t})=0
 \quad(j\geq0).
\]
Applying the long exact Ext sequence to
\eqref{eq:schur-extension}, these vanishings give
\begin{equation}\label{eq:schur-simple-adjunction}
 \Ext_A^j(M_m,S_{i,s})\cong\Ext_B^j(X,S_i),\qquad
 \Ext_A^j(M_m,S_{i,t})\cong\Ext_B^j(Z,S_i)
 \quad(j\geq0).
\end{equation}
Together with \eqref{eq:schur-b-simple-ext}, these isomorphisms give
\begin{equation}\label{eq:schur-simple-table}
\begin{array}{c|cccc}
 &S_{1,s}&S_{2,s}&S_{1,t}&S_{2,t}\\ \hline
 \dim_k\Hom_A(M_m,-)&m&0&m&0\\
 \dim_k\Ext_A^1(M_m,-)&0&2m&0&2m\\
 \dim_k\Ext_A^2(M_m,-)&0&0&0&0
\end{array}.
\end{equation}
All higher groups $\Ext_A^j(M_m,S_{i,\epsilon})$ vanish as well,
for $j\geq2$ and $\epsilon\in\{s,t\}$.

Take a minimal projective resolution
\[
 \cdots\longrightarrow Q_2\longrightarrow Q_1
 \longrightarrow Q_0\longrightarrow M_m\longrightarrow0,
 \qquad
 Q_j=\bigoplus_v P_v^{\oplus\beta_{j,v}},
\]
where $v$ runs through the four vertices of $A$.  Minimality means
that the image of $Q_{j+1}\to Q_j$ is contained in $\rad Q_j$.
Every map from $Q_j$ to a simple module annihilates $\rad Q_j$,
so all differentials in $\Hom_A(Q_\bullet,S_v)$ are zero.
Since $A$ is split basic, $\dim_k\Hom_A(P_u,S_v)=\delta_{uv}$,
and hence
\begin{equation}\label{eq:schur-betti}
 \beta_{j,v}=\dim_k\Hom_A(Q_j,S_v)
 =\dim_k\Ext_A^j(M_m,S_v)
 \quad(j\geq0).
\end{equation}
The zero row for $\Ext_A^2$ in \eqref{eq:schur-simple-table}
gives $Q_2=0$.  Exactness of the resolution then makes
$Q_1\to Q_0$ injective.  The rows for $\Hom$ and $\Ext_A^1$
identify its two terms as
\begin{equation}\label{eq:schur-projective-terms}
 P_m^+=P_{1,s}^{\oplus m}\oplus P_{1,t}^{\oplus m},\qquad
 P_m^-=P_{2,s}^{\oplus 2m}\oplus P_{2,t}^{\oplus 2m}.
\end{equation}
Since $m\geq1$, the $\Ext_A^1$ row is nonzero, and hence
$\pd_A M_m=1$.

The resulting minimal projective resolution is
\begin{equation}\label{eq:schur-minimal-resolution}
 0\longrightarrow P_{2,s}^{2m}\oplus P_{2,t}^{2m}
 \xrightarrow{d_m}P_{1,s}^{m}\oplus P_{1,t}^{m}
 \xrightarrow{\varepsilon_m}M_m\longrightarrow0.
\end{equation}
Consequently,
\[
 g(M_m)=[P_m^+]-[P_m^-]=m(p+q)=mw.
\]

\section{Generic indecomposability of \texorpdfstring{$mw$}{mw}}

Fix $m\geq1$.  With the projective terms of
\eqref{eq:schur-projective-terms}, put
\[
 \begin{gathered}
 P_m^-=P_{2,s}^{2m}\oplus P_{2,t}^{2m},\qquad
 P_m^+=P_{1,s}^{m}\oplus P_{1,t}^{m},\\
 \mathcal H_m=\Hom_A(P_m^-,P_m^+).
 \end{gathered}
\]
Thus $\mathcal H_m=\mathcal H_A(mw)$ is the normalized
presentation space for $mw$.

\begin{proposition}\label{prop:generic-ind}
There is a nonempty dense open subset $\mathcal V_m$ of
$\mathcal H_m$ such that every $d\in\mathcal V_m$ is injective,
its cokernel is a brick, and $d$ is indecomposable as a
projective presentation.  Consequently,
\[
 \ind(mw)=\{mw\}\qquad\text{for every }m\geq1.
\]
\end{proposition}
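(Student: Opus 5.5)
The proof will use the explicit point $d_m\in\mathcal H_m$ from the minimal resolution \eqref{eq:schur-minimal-resolution} and spread its good properties to a dense open set by semicontinuity. Since $\mathcal H_m$ is an affine space, hence irreducible, every nonempty Zariski open subset is dense. It therefore suffices to show that each of the conditions ``$d$ injective'' and ``$\dim_k\End_A(\Coker d)\le 1$'' defines an open subset containing $d_m$; their intersection is $\mathcal V_m$.

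\emph{Injectivity.} A map $d\colon P_m^-\to P_m^+$ is injective iff the induced linear map $\Hom_A(A,P_m^-)\to\Hom_A(A,P_m^+)$ on underlying vector spaces has full rank $\dim_k P_m^-$. This is the nonvanishing of some maximal minor of a matrix depending linearly on $d$, so it is an open condition. It holds at $d_m$ because \eqref{eq:schur-minimal-resolution} is exact.

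\emph{Endomorphisms of the cokernel.} On the injective locus $U$, $\dim_k\Coker d=\dim_kP_m^+-\dim_kP_m^-$ is constant. I will express $\End_A(\Coker d)$ through the presentation. Let $E(d)$ be the space of pairs $(\phi^-,\phi^+)\in\End(P_m^-)\times\End(P_m^+)$ with $\phi^+d=d\phi^-$. Since $d$ is injective, every such pair is determined by $\phi^+$. The map $E(d)\to\End_A(\Coker d)$ is surjective by projectivity, and its kernel consists of the pairs with $\phi^+=dh$ for some $h\colon P_m^+\to P_m^-$; given $\phi^+=dh$, injectivity forces $\phi^-=hd$. Hence
\[
 \dim_k\End_A(\Coker d)=\dim_kE(d)-\dim_k\{dh:h\in\Hom_A(P_m^+,P_m^-)\}.
\]
The first term is the kernel dimension of a linear map depending polynomially on $d$, so it is upper semicontinuous. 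The second is a rank, hence lower semicontinuous; subtracting it keeps upper semicontinuity. Actually $\Hom_A(P_m^+,P_m^-)=0$ here, since $P_{2,\ast}$ has no component at $(1,\ast)$. So the formula reduces to $\dim E(d)$, which is plainly upper semicontinuous in $d$.

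Thus $\{d\in U:\dim\End_A(\Coker d)\le1\}$ is open. By Proposition~\ref{prop:schur} it contains $d_m$, since $\Coker d_m\cong M_m$. For $d$ in this set, $\Coker d\neq0$ because its dimension is positive, so $\End_A(\Coker d)=k$ and the cokernel is a brick.

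\emph{Indecomposability.} For $d\in\mathcal V_m$, $P_d$ is quasi-isomorphic to $\Coker d$, so $\End_{\Kb}(P_d)\cong\End_{\Db}(\Coker d)=k$. A nontrivial decomposition $P_d\simeq P'\oplus P''$ would give a nontrivial idempotent unless one summand is zero in $\Kb$. Hence $P_d$ is indecomposable, and the canonical decomposition of $mw$ is $mw$ itself, so $\ind(mw)=\{mw\}$.

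The main obstacle is the semicontinuity bookkeeping for $\End(\Coker d)$. Here it is removed by the vanishing $\Hom_A(P_m^+,P_m^-)=0$, which I will verify from $\Hom_A(P_{1,\ast},P_{2,\ast'})\cong(P_{2,\ast'})_{(1,\ast)}=0$.
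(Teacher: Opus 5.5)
Your proof is correct, and its skeleton is the paper's. The injective locus is open, the brick condition is open on it, both sets contain $d_m$ (Proposition~\ref{prop:schur}), and density comes from irreducibility of $\mathcal H_m$. You differ from the paper in two justifications.

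\emph{Semicontinuity.} The paper treats the cokernels over $\mathcal O_m$ as an algebraic family of modules of fixed dimension vector. It then invokes the Crawley-Boevey--Schr\"oer lemma on upper semicontinuity of $\dim_k\End$. That lemma is general, but it requires the family to be realized in a module variety. You instead identify $\End_A(\Coker d)$, for injective $d$, with the space $E(d)$ of chain endomorphisms of $P_d$. This space is the kernel of $(\phi^-,\phi^+)\mapsto\phi^+d-d\phi^-$, a map that is linear in $d$. The vanishing $\Hom_A(P_m^+,P_m^-)=0$ removes the null-homotopic correction, so semicontinuity reduces to semicontinuity of rank. Even without that vanishing, the correction term is the constant $\dim_k\Hom_A(P_m^+,P_m^-)$ on the injective locus.

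\emph{Indecomposability.} The same computation gives $\End_{\Kb}(P_d)\cong E(d)\cong k$. So $P_d$ is indecomposable because its endomorphism ring is local. The paper instead argues from minimality of the resolution together with indecomposability of the brick $\Coker d$.

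Your route is self-contained. The paper's route is shorter but rests on the cited lemma.
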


\begin{proof}
Let
\[
 \mathcal O_m=\{d\in\mathcal H_m:d\text{ is injective}\}.
\]
The function $d\mapsto\dim_k\ker d$ is upper semicontinuous on
$\mathcal H_m$, so $\mathcal O_m$ is open.  It is nonempty,
since it contains the differential $d_m$ of
\eqref{eq:schur-minimal-resolution}, whose cokernel is $M_m$.

On $\mathcal O_m$, the cokernels form an algebraic family of
modules with fixed dimension vector $(m,m,2m,2m)$.  The function
$d\mapsto\dim_k\End_A(\Coker d)$ is upper semicontinuous
\cite[Lemma~4.3]{CBS}.  Put
\[
 \mathcal V_m=\{d\in\mathcal O_m:
   \dim_k\End_A(\Coker d)=1\}.
\]
This set is nonempty, since $\Coker d_m=M_m$ is a brick by
Proposition~\ref{prop:schur}.  All these cokernels are nonzero,
so their endomorphism dimensions are at least one.
Upper semicontinuity therefore makes $\mathcal V_m$ open in
$\mathcal O_m$, and hence in $\mathcal H_m$.
It is dense because $\mathcal H_m$ is an irreducible affine space.

For every $d\in\mathcal V_m$, the exact sequence
\[
 0\longrightarrow P_m^-\xrightarrow{d}P_m^+
 \longrightarrow\Coker d\longrightarrow0
\]
is a minimal projective resolution, since $P_m^-$ and $P_m^+$
have no common indecomposable summand.  Its cokernel is a brick
and hence indecomposable, so its minimal projective resolution
$P_d$ is indecomposable.  Thus a general presentation in
$\mathcal H_A(mw)$ is indecomposable, giving
$\ind(mw)=\{mw\}$.
\end{proof}

\begin{proof}[Proof of Theorem~\ref{thm:counterexample}]
Corollary~\ref{prop:tf-class} gives
\[
 [w]_{\mathrm{TF}}=\R_{>0}p+\R_{>0}q.
\]
The vectors $p,q$ are linearly independent, so this open
quadrant equals its relative interior
$[w]_{\mathrm{TF}}^\circ$.
By Proposition~\ref{prop:generic-ind},
$\ind(mw)=\{mw\}$ for every $m\geq1$, and hence
\[
 \ind(\N w)=\{mw:m\geq1\},\qquad
 \Cone^\circ\bigl(\ind(\N w)\bigr)
 =\mathcal C^\circ(w)=\R_{>0}w.
\]
Indeed, the positive multiples $mw$ all generate the same
ray, and $\Cone^\circ(\ind(mw))=\R_{>0}w$ for each $m$.
Since $w=p+q$, this ray lies in the positive quadrant.
The inclusion is strict: $2p+q$ belongs to
$\R_{>0}p+\R_{>0}q$ and is not a scalar multiple of $p+q$.
\end{proof}

\section{Directed \texorpdfstring{$E$}{E}-invariants}

For integral weights $g,h$, the directed invariant is
\cite[Definition~2.19]{AI}
\[
 E_A(g,h)=
 \min_{\substack{f\in\mathcal H_A(g)\\d\in\mathcal H_A(h)}}
 \dim_k\Hom_{\Kb(\operatorname{proj}A)}(P_f,P_d[1]).
\]

\begin{proposition}\label{prop:cross-e}
For all positive integers $m,n$,
\[
 E_A(mp,nq)=0,\qquad E_A(nq,mp)=mn.
\]
\end{proposition}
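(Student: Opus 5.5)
Both invariants are computed by reducing to the Kronecker algebra $B$ through the functors $L_s,L_t$. We have $\mathcal H_A(mp)=\Hom_A(P_{2,s}^{2m},P_{1,s}^m)\cong\Hom_B(P_2^{2m},P_1^m)$, since $\Hom_A(L_sP,L_sP')\cong\Hom_B(P,P')$ by the adjunction \eqref{eq:schur-adjunction}. So every $f\in\mathcal H_A(mp)$ has the form $L_s\phi$ with $\phi\in\mathcal H_B(mh)$, and $P_f=L_sP_\phi$. Likewise every $d\in\mathcal H_A(nq)$ is $L_t\psi$ with $\psi\in\mathcal H_B(nh)$. For the two-term complexes one has $\Hom_{\Kb}(P_f,P_d[1])\cong\Coker\bigl(\Hom(P_f^+,P_d^-)\oplus\Hom(P_f^-,P_d^+)\to\Hom(P_f^-,P_d^-)\bigr)$, and I will compute this using the Hom spaces between the $L$'s.

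\textbf{Step 1: the needed Hom spaces.} From the adjunctions: $\Hom_A(L_sP,L_tP')\cong\Hom_B(P,P')$, since the $s$-component of $L_sP$ is $P$ and the $t$-component of $L_tP'$ is $P'$. On the other hand $\Hom_A(L_tP',L_sP)\cong\Hom_B(P',P)$, because the $t$-component of $L_sP$ is $P$. The asymmetry must come from somewhere else. A morphism $L_sP\to L_tP'=(0\to P')$ is a pair $(u,v)$ with $u\colon P\to 0$, so $u=0$, and $v\colon P\to P'$ with $v\circ I=0\circ u$. This forces $v=0$, so
\[
 \Hom_A(L_sP,L_tP')=0 .
\]
A morphism $L_tP'\to L_sP$ is a pair $(0,v)$ with $v\colon P'\to P$ arbitrary, so $\Hom_A(L_tP',L_sP)\cong\Hom_B(P',P)$.

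\textbf{Step 2: $E_A(mp,nq)=0$.} Here $P_f=L_sP_\phi$ and $P_d[1]=L_tP_\psi[1]$. All components of $\Hom_{\Kb}(P_f,P_d[1])$ are maps from $L_s$-projectives to $L_t$-projectives, and by Step 1 these vanish. So the Hom space is zero for every $f,d$, and the minimum is $0$.

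\textbf{Step 3: $E_A(nq,mp)=mn$.} Now $\Hom_{\Kb}(L_tP_\psi,L_sP_\phi[1])\cong\Hom_{\Kb(\operatorname{proj}B)}(P_\psi,P_\phi[1])$ for all $\psi,\phi$, because the functor $\Hom_A(L_t-,L_s-)$ agrees with $\Hom_B$ termwise and naturally. Hence $E_A(nq,mp)=E_B(nh,mh)$, and it remains to compute this Kronecker invariant. Since $B$ is hereditary, a general $\psi$ is injective, with cokernel $N$ general of dimension vector $(n,3n-2n)=(n,n)$ and $g$-vector $nh$; similarly $\phi$ has cokernel $N'$ of dimension $(m,m)$. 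Then $\Hom_{\Kb}(P_\psi,P_\phi[1])\cong\Ext^1_B(N,N')$. The Euler form gives
\[
 \langle(n,n),(m,m)\rangle_B=nm+nm-3nm=-nm ,
\]
so $\dim\Ext^1_B(N,N')=nm+\dim\Hom_B(N,N')$. For general $N,N'$ the Hom space vanishes: $(1,1)$ is a real Schur root with preprojective/preinjective-free general representations, and Schofield's criterion $\hom(\alpha,\beta)=0$ applies to the imaginary-free situation here. Concretely, one can exhibit a pair with $\Hom=0$ (for instance direct sums of regular simples $(1,1)$ with disjoint parameters) and then use upper semicontinuity. The minimum is therefore $mn$.

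The main obstacle is justifying in Step 3 that the minimum over presentations equals the minimum of $\dim\Ext^1$ over cokernels, including that general presentations are injective, and that general $\Hom_B$ between dimension vectors $(n,n)$ and $(m,m)$ is zero. I would settle the latter with explicit representations $(k^n,k^n;I,D_1,D_2)$ having generic diagonal matrices and distinct spectra.
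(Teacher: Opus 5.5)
Your Step~2 is the paper's argument. The reduction in Step~3 is correct and clean: every $d\in\mathcal H_A(nq)$ is $L_t\psi$ and every $f\in\mathcal H_A(mp)$ is $L_s\phi$. Taking $t$-components gives an isomorphism of complexes $\Hom_A^\bullet(P_d,P_f)\cong\Hom_B^\bullet(P_\psi,P_\phi)$, so $E_A(nq,mp)=E_B(nh,mh)$. Your attaining pair is essentially the paper's ($N_1=(I_m,\lambda I_m,0)$, $N_2=(I_n,\mu I_n,0)$ with $\lambda\neq\mu$).

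The gap is the lower bound. $E$ is a minimum over \emph{all} presentations, so you need $\dim\Hom_{\Kb}(P_\psi,P_\phi[1])\geq mn$ for every pair. The module Euler form $\dim\Ext^1_B(N,N')=mn+\dim\Hom_B(N,N')$ only covers pairs in which both $\psi$ and $\phi$ are injective. So ``the minimum is therefore $mn$'' does not follow, and this is exactly the obstacle you leave open. The missing step is to take the Euler characteristic of the Hom complex $C^\bullet=\Hom_B^\bullet(P_\psi,P_\phi)$ itself. Here $C^{-1}=\Hom_B(P_1^n,P_2^{2m})=0$, $\dim C^0=4mn+mn=5mn$, $\dim C^1=6mn$ and $C^2=0$. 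Hence $\dim H^1(C^\bullet)=mn+\dim H^0(C^\bullet)\geq mn$ for every pair, with no injectivity or genericity needed. This is the paper's bound $6mn-\rank\partial^0\geq mn$. Alternatively, $6mn-\rank\partial^0$ is upper semicontinuous, so values below $mn$ would fill a nonempty open set, and that set would meet the nonempty open locus of injective pairs.

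Some smaller corrections:
\begin{itemize}
\item Your opening formula has the wrong terms. A chain map $P_f\to P_d[1]$ is a map $P_f^-\to P_d^+$, and $\Hom_{\Kb}(P_f,P_d[1])\cong\Coker\bigl(\Hom(P_f^-,P_d^-)\oplus\Hom(P_f^+,P_d^+)\to\Hom(P_f^-,P_d^+)\bigr)$. Step~2 is unaffected, since $\Hom_A(L_sP_2,L_tP_1)=0$.
\item The first sentence of Step~1 is false, as you yourself then show.
\item $(1,1)$ is not a real root of the three-arrow Kronecker quiver: the Tits form gives $1+1-3=-1$, so it is imaginary. Drop the appeal to Schofield.
\item You do not need generic vanishing of $\Hom$, because one pair suffices for the upper bound. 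You should, however, check that $(I_n,\mu I_n,0)$ really is the cokernel of a point of $\mathcal H_B(nh)$. Its first arrow is bijective, so its top is $S_1^n$. The kernel of the projective cover $P_1^n\to(I_n,\mu I_n,0)$ has dimension vector $(0,2n)$, hence is $P_2^{2n}$.
\end{itemize}
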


\begin{proof}
Take arbitrary $f\in\mathcal H_A(mp)$ and $d\in\mathcal H_A(nq)$,
with associated complexes
\[
 P_f=[P_{2,s}^{2m}\xrightarrow{f}P_{1,s}^{m}],\qquad
 P_d=[P_{2,t}^{2n}\xrightarrow{d}P_{1,t}^{n}].
\]
Evaluation at $s$ gives $\Hom_A(P_{2,s},P_{1,t})=0$.
The degree-one term of $\Hom_A^\bullet(P_f,P_d)$ is therefore
zero, so $E_A(mp,nq)=0$.

For the reverse direction, let $C^\bullet=\Hom_A^\bullet(P_d,P_f)$.
Evaluation at $t$ gives
\[
 \begin{aligned}
 C^{-1}&=\Hom_A(P_{1,t}^{n},P_{2,s}^{2m})=0,\\
 C^0&=\Hom_A(P_{2,t}^{2n},P_{2,s}^{2m})
       \oplus\Hom_A(P_{1,t}^{n},P_{1,s}^{m}),\\
 C^1&=\Hom_A(P_{2,t}^{2n},P_{1,s}^{m}).
 \end{aligned}
\]
Thus $\dim_k C^0=4mn+mn=5mn$ and $\dim_k C^1=6mn$.
Since $C^2=0$, for every pair $(d,f)$ one has
\[
 \dim_k\Hom_{\Kb(\operatorname{proj}A)}(P_d,P_f[1])
 =\dim_kH^1(C^\bullet)
 =6mn-\rank\partial^0\geq mn.
\]

To attain this bound, choose $\lambda\ne\mu$ in $k$ and set
\[
 N_1=(I_m,\lambda I_m,0),\qquad
 N_2=(I_n,\mu I_n,0).
\]
A morphism $N_2\to N_1$ has the same matrix $T$ at both vertices
and satisfies $\lambda T=\mu T$, so $\Hom_B(N_2,N_1)=0$.
The Euler identity gives
\[
 \dim_k\Ext_B^1(N_2,N_1)
 =-\langle(n,n),(m,m)\rangle_B=mn.
\]
By \eqref{eq:schur-b-simple-ext}, their minimal projective
resolutions have terms $P_2^{2m}\to P_1^m$ and
$P_2^{2n}\to P_1^n$, respectively.
Applying $L_s$ to the first and $L_t$ to the second gives
points $f\in\mathcal H_A(mp)$ and $d\in\mathcal H_A(nq)$
whose complexes resolve $L_sN_1$ and $L_tN_2$.
The derived adjunction \eqref{eq:schur-derived-adjunction} yields
\[
 \Hom_{\Kb(\operatorname{proj}A)}(P_d,P_f[1])
 \cong\Ext_A^1(L_tN_2,L_sN_1)
 \cong\Ext_B^1(N_2,N_1).
\]
Its dimension is $mn$, proving $E_A(nq,mp)=mn$.
\end{proof}

\end{document}